\documentclass[11pt,a4paper]{article}
\usepackage[T1]{fontenc}
\usepackage[utf8]{inputenc}
\usepackage{lmodern}
\usepackage{amsmath,amssymb,amsthm,mathtools,mathrsfs}
\usepackage[margin=28mm,headheight=14pt]{geometry}
\usepackage{microtype}
\usepackage{enumitem}
\usepackage{needspace}
\usepackage{xcolor}
\usepackage{fancyhdr}
\usepackage{hyperref}
\hypersetup{colorlinks=true,linkcolor=black,citecolor=black,urlcolor=blue!45!black,
 pdftitle={A short proof of the Bernoulli decomposition theorem},
 pdfauthor={Witold Bednorz, Rafal Martynek, Rafal Meller},
 pdfsubject={Revised manuscript: truncations, partitions, and Bernoulli processes}}

\allowdisplaybreaks[1]
\numberwithin{equation}{section}
\newtheorem{theorem}{Theorem}[section]
\newtheorem{proposition}[theorem]{Proposition}
\newtheorem{lemma}[theorem]{Lemma}

\theoremstyle{remark}
\newtheorem{remark}[theorem]{Remark}
\newcommand{\E}{\mathbb E}
\newcommand{\Prob}{\mathbb P}
\newcommand{\N}{\mathbb N}
\newcommand{\Z}{\mathbb Z}
\newcommand{\R}{\mathbb R}
\newcommand{\A}{\mathcal A}
\newcommand{\Pset}{\mathcal P}
\newcommand{\diam}{\operatorname{diam}}
\newcommand{\dist}{\operatorname{dist}}

\newcommand{\norm}[1]{\left\lVert #1\right\rVert}

\newcommand{\clip}[2]{\left[#1\right]_{#2}}
\newcommand{\F}{\mathcal F}
\newcommand{\eps}{\varepsilon}

\title{\vspace{-1em}A short proof of the Bernoulli\texorpdfstring{\\}{ }decomposition theorem}
\author{{Witold Bednorz, Rafal Martynek and Rafal Meller}
\footnote{{\bf Subject classification:} 60G15, 60G17}
\footnote{{\bf Keywords and phrases:} Canonical Processes,  Invariant Method}
\footnote{Research partially supported by  Grant UMO-2022/47/B/ST1/02114}
\footnote{Institute of Mathematics, University of Warsaw, Banacha 2, 02-097 Warszawa, Poland}}
\date{}

\begin{document}
\maketitle
\thispagestyle{plain}
\begin{abstract}
We give a construction of admissible partitions that yields the Bernoulli
decomposition theorem. The construction uses clipped coordinates and a
splitting into two monotone functions. The canonical distance of the split
process is comparable to the distance before splitting. This permits a direct
application of the coordinate-removal principle of Bednorz and Lata{\l}a.
A single selection of scales reduces the chaining sum to decreases of the
associated functionals over intervals of overlap at most two. Combined with
the deterministic partition criterion of Bednorz and Lata{\l}a, this gives
sets $T_1,T_2$ such that $T\subset T_1+T_2$ and
$\sup_{t\in T_1}\norm{t}_1+\gamma_2(T_2)\le C b(T)$.
\end{abstract}

\section{Introduction}

Let $(\eps_i)_{i\ge1}$ be independent random variables taking the values
$-1$ and $1$ with equal probabilities. For $t\in\ell^2$, set
\[
 X_t=\sum_{i\ge1}t_i\eps_i,
 \qquad b(T)=\E\sup_{t\in T}X_t,\qquad T\subset\ell^2.
\]
For an arbitrary index set, the expectation of the supremum is understood as
the supremum of the corresponding expectations over finite subsets. For
countable sets this agrees with the usual expectation. All constructions
below are carried out for countable sets; the passage to arbitrary sets is
given in Section~\ref{sec:mainproof}.

Write $N_n=2^{2^n}$ for $n\ge0$. We use the admissible-net convention
\begin{equation}\label{eq:gamma}
 \gamma_2(S)=\inf_{(S_n)}\sup_{s\in S}
       \sum_{n\ge0}2^{n/2}\dist_2(s,S_n),
 \qquad |S_0|=1,\quad 1\le |S_n|\le N_n\quad(n\ge1),
\end{equation}
where the finite sets $S_n$ may lie in the ambient Hilbert space. This
convention is equivalent, up to universal constants, to the usual definitions
using nets in $S$ or admissible partitions; see~\cite{Talagrand2014}.

The following theorem was proved by Bednorz and Lata{\l}a~\cite{BL2014}.
\begin{theorem}[Bernoulli decomposition]\label{thm:main}
For every nonempty $T\subset\ell^2$ with $b(T)<\infty$, there are
$T_1,T_2\subset\ell^2$ such that
\begin{equation}\label{eq:main}
 T\subset T_1+T_2,\qquad
 \sup_{x\in T_1}\norm{x}_1\le Cb(T),\qquad
 \gamma_2(T_2)\le Cb(T),
\end{equation}
where $C$ is a universal constant.
\end{theorem}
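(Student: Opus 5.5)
We build an admissible sequence of partitions $(\A_n)$ of $T$ (first for countable $T$) together with, for each $A\in\A_n$, a base point $\pi_n(A)$ and a truncation level $j_n(A)\in\Z$, and then read off the decomposition from it. We use the deterministic criterion of Bednorz and Lata{\l}a: suppose that
\[
 \sup_{t\in T}\sum_{n\ge0}2^{n}r^{-j_n(A_n(t))}\le Cb(T),
\]
where $r$ is a fixed constant, $j_n$ is nondecreasing along nested sets, and $\diam$ of the clipped coordinates $\clip{t_i-\pi_n(A)_i}{r^{-j_n(A)}}$ is controlled in $\ell^2$ at level $2^{n/2}r^{-j_n(A)}$. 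Then writing $t=t^{(1)}+t^{(2)}$ is routine. Here $t^{(2)}$ collects the clipped increments along the chain, so $\gamma_2(T_2)$ is bounded through the nets $\{\pi_n(A)\}$ together with the clipping. The coordinates where an increment first exceeds its threshold $r^{-j_n}$ go into $t^{(1)}$, and their $\ell^1$ norm is at most $\sum_n 2^n r^{-j_n}$ by a counting argument. So the whole task reduces to constructing the partition with the chaining sum bounded by $Cb(T)$. Arbitrary $T$ then follows by exhausting with countable (finite) subsets and a compactness or limiting argument in the decomposition, which Section~\ref{sec:mainproof} handles.

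The construction is greedy in the Talagrand style, using functionals
\[
 F_{n}(A)=\E\sup_{t\in A}\sum_i \clip{t_i-\pi(A)_i}{r^{-j}}\eps_i ,
\]
that is, Bernoulli suprema of coordinates clipped at the current level. The key technical point is that clipping $x\mapsto\clip{x}{u}$ is not a contraction compatible with splitting. We therefore write the clipped coordinate as a sum of two monotone (nondecreasing) $1$-Lipschitz functions of $t_i$, namely its positive and negative parts relative to the base point. This yields a process indexed by pairs whose canonical $\ell^2$ distance is comparable to the distance before splitting, with constants $1$ and $2$. By the contraction principle for monotone Lipschitz maps, the Bernoulli supremum of the split process is bounded by $b$ of the original set. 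The comparability matters because it allows the coordinate-removal principle of Bednorz and Lata{\l}a to be applied directly. That principle says that if many points are $\ell^2$-separated at scale $2^{n/2}r^{-j}$ after clipping at $r^{-j}$, then $F$ decreases by order $2^n r^{-j}$ when passing to a piece. Applying this gives a Sudakov-type minoration with coordinate removal: each set $A$ is split into at most $N_n$ pieces. On each piece, either the diameter drops (and we keep $j$), or $F$ drops by $c\,2^nr^{-j}$ (and we increase $j$ by one).

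The main obstacle is summing the chaining bound. A naive count charges $2^n r^{-j_n}$ at every step and loses a factor $\log$. Instead we select a single sequence of scales along each chain $A_n(t)$: the times $n$ at which $j$ increases, or at which $F$ drops by a definite fraction. Between consecutive selected times the terms $2^n r^{-j}$ grow geometrically, so they are dominated by the last term of each block. Each selected term is then bounded by the decrease $F(A_{n_k})-F(A_{n_{k+1}})$ of the functional over the interval $[n_k,n_{k+1}]$. If these intervals overlap at most twice, the sum telescopes to at most $2F(T)\le 2b(T)$. The hard part is arranging the selection so that the intervals overlap at most twice, despite $F$ depending on the truncation level, which changes along the chain. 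To handle this, we compare $F$ at levels $j$ and $j+1$ via the monotone splitting: clipping more severely only decreases the split functional, up to a constant. We verify the overlap bound by a stopping-time argument on the pair $(n,j)$. With that in place, the criterion gives \eqref{eq:main}.
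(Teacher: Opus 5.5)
\textbf{Verdict: there is a genuine gap.} It sits exactly at the step you call the hard part. The tool you offer there, a monotone comparison of $F$ at levels $j$ and $j+1$ plus a stopping-time argument, cannot close it: monotonicity gives $F_{m+1}\le F_m$, never a decrease.

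Take an index $n$ with $j_{n-1}=j_n<j_{n+1}$; any selection of scales has to charge such indices. The only gain available for $a_n=2^nr^{-j_n}$ is the remainder property at level $n$. It says $F$ drops by $\eta_{\rm G}a_n$ on subsets of $A_n(t)$ lying in balls of radius $K^{-1}2^{n/2}r^{-j_n}$ for the \emph{coarse} metric $d_{I_n,\pi_n,j_n}$. After the level increases, the construction splits with the \emph{fine} metric $d_{J,v,j+1}$ (finer clip, new center, fewer coordinates). Its small pieces need not lie in such coarse balls, so nothing forces the level-$n$ gain to be realized later.

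The paper fixes this by changing the construction, not the bookkeeping. At the second increase $q=\tau(n)$ it performs a second subdivision. Each fine piece $B$ is cut, via coordinate removal, into two kinds of sets:
\begin{itemize}
\item $N_n$ old-metric balls of radius $K^{-1}2^{n/2}r^{-j}$, where the level-$n$ gain applies;
\item a remainder $E$ with $F_{I,u,j}(B)\ge F_{J,v,j+1}(E)+\eta_{\rm W}2^nr^{-j}$.
\end{itemize}
The fine block has diameter about $2^{(n+k)/2}r^{-j-1}$ with $k=q-n$, so the removal error is of order $2^{k/2}r^{-1}\cdot 2^nr^{-j}$. This forces a bounded delay $k\le\kappa$ with $2^\kappa=r^{3/2}$. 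One also needs $k\ge2$ so the extra $N_n$ pieces fit into $N_{q-1}$. The choice $\alpha=2^{1/3}$, $(2/\alpha)^\kappa=r$ in the ordering lemma is exactly what guarantees $\tau(n)\le n+\kappa$. The charged intervals are then $[n_i-1,\tau(n_i)]$, not $[n_k,n_{k+1}]$. They genuinely overlap, since only $\tau(n_i)\le n_{i+1}+1$ holds, and that overlap is where the factor $2$ comes from. None of these ingredients appears in your sketch.

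\textbf{Your splitting cannot feed the coordinate removal either.} That lemma bounds $b_y$ on the remainder by $b_z$ on the whole set, where $y$ is a block of small $\ell^2$ diameter. To be useful, $b_y$ must be the next-level functional. Positive and negative parts of $\clip{t_i-u_i}{j}$ at the base point give no such block.

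The paper splits relative to the \emph{new} center $v$. On $J=\{i\in I:|v_i-u_i|\le r^{-j}\}$ it writes $f_i=\theta_i+g_i$ with $g_i(s)=\clip{s-v_i}{j+1}$. Then $\theta_i$ is monotone because the fine window around $v_i$ lies in the linear region of $f_i$. The $g$-block has diameter exactly the $d_{J,v,j+1}$-diameter, and $b_y$ is $F_{J,v,j+1}$.

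This is also why the functionals must be restricted to the tracking coordinate sets. With your sum over all $i$, the claim that clipping more severely decreases $F$ fails: in a coordinate with $|v_i-u_i|>3r^{-j}$, the new clip varies where the old one is constant.

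\textbf{Your partition step has the roles reversed.} Small pieces must receive $j+1$, because the energy bound at the new level comes from $\norm{\varphi_{J,w,j+1}(t)}_2^2\le d_{I,u,j}(t,w)^2$. The remainder keeps $j$ and carries the gain.
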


The converse estimate follows from the triangle inequality for suprema,
the bound $b(T_1)\le\sup_{x\in T_1}\norm{x}_1$, and the generic chaining
upper bound $b(T_2)\le C\gamma_2(T_2)$. Thus~\eqref{eq:main} characterizes
boundedness of Bernoulli processes.

Our aim is to simplify the partition construction in the proof of
Theorem~\ref{thm:main}. We retain the deterministic decomposition criterion
and the coordinate-removal principle from~\cite{BL2014}. The latter is
recalled, with a proof, in Lemma~\ref{lem:removal}. Instead of a family of
chopping maps depending on several truncation levels, we use a single clipped
coordinate at each node. A change of center splits this coordinate into two
monotone pieces. Keeping both pieces preserves the old metric within a
factor $\sqrt2$, while keeping only the finer piece decreases the expected
supremum. A second subdivision reconciles these two operations.

The remaining issue is to sum the scales along each branch. An ordering
lemma selects indices at which an unchanged level is followed by an increase.
The next increase occurs within a fixed number of steps. Each selected index
then produces a functional decrease, and the intervals on which these
decreases occur have overlap at most two.

Throughout, constants described as universal are independent of $T$ and of
the partition indices. Constants $C$ without a subscript may change between
occurrences. The parameters $K,r,\kappa$ will be fixed, in this order, in
Section~\ref{sec:parameters}; they will also be universal. All balls are closed.
Empty partition elements are discarded.

\section{A deterministic partition criterion}\label{sec:criterion}

A refining sequence $(\A_n)_{n\ge0}$ of partitions of $T$ is
\emph{admissible} if $\A_0=\{T\}$ and $|\A_n|\le N_n$ for $n\ge1$.
For $t\in T$, let $A_n(t)$ be its member of $\A_n$. Data assigned to a
partition element are also regarded as functions of $t$; for example,
$j_n(t)=j_n(A_n(t))$.

We use the following result in the form of~\cite[Theorem~3.1]{BL2014}.
The change from the partition convention for $\gamma_2$ to~\eqref{eq:gamma}
only changes the universal constant.

\begin{theorem}[Partition criterion]\label{thm:criterion}
Let $r\ge2$, $M>0$, and let $(\A_n)$ be admissible. Assign an integer
$j_n(A)$ and a point $\pi_n(A)\in T$ to every $A\in\A_n$. Suppose
\begin{equation}\label{eq:initialdiam}
 \diam_2(T)^2\le M r^{-2j_0(T)}.
\end{equation}
For $n\ge1$, $A\in\A_n$, and its parent $A'\in\A_{n-1}$, assume that
either
\begin{enumerate}[label=\textup{(\roman*)},leftmargin=2em]
\item $j_n(A)=j_{n-1}(A')$ and $\pi_n(A)=\pi_{n-1}(A')$; or
\item $j_n(A)>j_{n-1}(A')$, $\pi_n(A)\in A'$, and, for every $t\in A$,
\begin{equation}\label{eq:criterionenergy}
 \sum_{i\in I_n(t)}\min\{(t_i-\pi_n(t)_i)^2,r^{-2j_n(t)}\}
 \le M2^n r^{-2j_n(t)},
\end{equation}
where
\begin{equation}\label{eq:tracking}
 I_n(t)=\{i\ge1:|\pi_{k+1}(t)_i-\pi_k(t)_i|
                 \le r^{-j_k(t)}\text{ for }0\le k<n\}.
\end{equation}
\end{enumerate}
Then $T\subset T_1+T_2$ for some $T_1,T_2\subset\ell^2$ satisfying
\begin{align}
 \sup_{x\in T_1}\norm{x}_1
 &\le CM\sup_{t\in T}\sum_{n\ge0}2^n r^{-j_n(t)},\label{eq:criterionl1}\\
 \gamma_2(T_2)
 &\le C\sqrt M\sup_{t\in T}\sum_{n\ge0}2^n r^{-j_n(t)}.\label{eq:criteriongamma}
\end{align}
\end{theorem}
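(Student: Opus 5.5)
The plan is to build the decomposition coordinatewise along each chain. For $t\in T$ I define, for every $n$, the ``last good index''
\[
 \tau(t,i)=\min\{k\ge0: |\pi_{k+1}(t)_i-\pi_k(t)_i|>r^{-j_k(t)}\}\in\N\cup\{\infty\}
\]
(so $i\in I_n(t)$ iff $\tau(t,i)\ge n$). Write the telescoping representation
\[
 t_i=\pi_0(t)_i+\sum_{k\ge0}\bigl(\pi_{k+1}(t)_i-\pi_k(t)_i\bigr)
\]
(convergence in $\ell^2$ follows from \eqref{eq:criterionenergy} and $\sum 2^n r^{-j_n}<\infty$). I cut this sum at the first bad step $\tau(t,i)$. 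The part beyond the cut,
\[
 x(t)_i=t_i-\pi_{\tau(t,i)}(t)_i \qquad(\tau(t,i)<\infty),
\]
is placed into $T_1$. The remaining part, together with a clipped version of $t_i-\pi_n(t)_i$ at level $r^{-j_n(t)}$, is placed into $T_2$. Concretely, $T_2$ is the set of $y(t)=t-x(t)$. I use $\clip{\cdot}{}$-type truncation so that on $I_n(t)$ the increments are bounded by $r^{-j_n}$. Since $\pi_n(t)$ depends only on $A_n(t)$, the sets $\{y_n(t)\}$, with $y_n(t)_i=\pi_{\min(n,\tau(t,i))}(t)_i$, number at most $N_n$ (the $\tau$ cut is determined by the chain up to $n$). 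They therefore form admissible nets for $T_2$.

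\textbf{$\ell^1$ bound for $T_1$.} If $\tau(t,i)=k$, then $|\pi_{k+1}(t)_i-\pi_k(t)_i|>r^{-j_k}$. Only steps of type (ii) change $\pi$, and $\pi_{k+1}(t)\in A_k(t)$. I apply \eqref{eq:criterionenergy} at level $k$ (using the last type (ii) step $m\le k$ with the same $\pi$). By the triangle inequality, either $|t_i-\pi_k(t)_i|\ge r^{-j_k}/2$ or $|t_i-\pi_{k+1}(t)_i|\ge r^{-j_k}/2$, and similarly for the point $\pi_{k+1}(t)$, which is itself an element of $A_k$ where the energy bound holds. Each such coordinate contributes a full $r^{-2j_k}$ to the truncated energy, so the number of such $i\in I_k$ is at most $4M2^k$. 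On those coordinates I bound $|x(t)_i|$ by $|t_i-\pi_{k+1}(t)_i|+|\pi_{k+1}(t)_i-\pi_k(t)_i|$. The large jumps are themselves controlled via an $\ell^1$ estimate on the set where $|t_i-\pi_k(t)_i|>r^{-j_k}$: by Cauchy--Schwarz this is bounded by the count times the scale, plus the tail of the telescoping sum. Summing over $k$ gives $\norm{x(t)}_1\le CM\sum_k 2^k r^{-j_k(t)}$. I expect this step to need care: the jump $|\pi_{k+1}-\pi_k|$ can be much larger than $r^{-j_k}$. I would handle it by attributing $t_i-\pi_k(t)_i$ entirely to $T_1$ and bounding $\sum_{i:\,|t_i-\pi_k(t)_i|>r^{-j_k}/2}|t_i-\pi_k(t)_i|$. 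This in turn I would dominate by chaining: decompose the sum over later levels, using at each level $l\ge k$ the count bound $M2^l$ times $r^{-j_l}$ for coordinates still in $I_l$.

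\textbf{$\gamma_2$ bound for $T_2$.} For $y(t)$ and the net point $y_n(t)$, the difference is supported on $I_n(t)$ and equals $t_i-\pi_n(t)_i$ there, possibly clipped. Its $\ell^2$ norm squared is therefore at most $\sum_{i\in I_n}\min\{(t_i-\pi_n(t)_i)^2,r^{-2j_n}\}$ plus the unclipped excess, which is already accounted for in $T_1$. Hence $\dist_2(y(t),\{y_n\})\le\sqrt{M2^n}\,r^{-j_n(t)}$. For $n=0$ the same bound follows from \eqref{eq:initialdiam}. Multiplying by $2^{n/2}$ and summing gives \eqref{eq:criteriongamma}. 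Type (i) steps cause no issue, since the chain and cut are unchanged. The main obstacle I anticipate is making the clipping consistent, so that the excess $(|t_i-\pi_n(t)_i|-r^{-j_n})_+$ on $I_n$ is genuinely charged to $T_1$ without destroying the ``function of $A_n(t)$'' property of the nets. I would first try defining the cut via $\tau$ together with the level-$n$ clipping, and verify measurability against $\A_n$.
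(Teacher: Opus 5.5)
The paper does not prove this statement. It quotes it from Bednorz--Lata{\l}a (their Theorem~3.1) and only adjusts the $\gamma_2$ convention, so your argument has to stand on its own. Your skeleton is the right one: cut each coordinate at the first bad jump $\tau(t,i)$, which is determined by $A_n(t)$, and use the nets $y_n(t)_i=\pi_{\min(n,\tau(t,i))}(t)_i$. But there is a genuine gap: both estimates fail as you set them up.

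\textbf{The $\gamma_2$ estimate.} The telescoping identity $t_i=\pi_0(t)_i+\sum_k(\pi_{k+1}(t)_i-\pi_k(t)_i)$ is false in general. The hypotheses control only clipped distances, so the chain need not converge to $t$, even on coordinates where it never jumps. Take $T=\{0,\epsilon e_1\}$, $\A_n=\{T\}$, $\pi_n\equiv0$, $j_n=j_0+n$ with $r^{-j_0-1}<\epsilon\le r^{-j_0}$ and $r>2$. All hypotheses hold with $M=1$, and the right-hand sides are finite. Here $\tau\equiv\infty$, so your $x(t)=0$, $y(\epsilon e_1)=\epsilon e_1$, and every $y_n=0$. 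Hence $\dist_2(y(\epsilon e_1),\{y_n\})=\epsilon$ for all $n$. This contradicts your claimed bound $\sqrt{M2^n}\,r^{-j_n}$ and makes the chaining sum diverge. The ``unclipped excess'' is not in $T_1$ under your definition, and an $n$-dependent excess cannot be charged to a single vector $x(t)$. This is exactly the obstacle you leave open.

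\textbf{The $\ell^1$ estimate.} Your argument runs in the wrong direction. If $\tau(t,i)=k$, then $i\notin I_l(t)$ for $l>k$, so no count at later levels applies to $i$. Meanwhile $|t_i-\pi_k(t)_i|$ can be much larger than $r^{-j_k}$. So the count $|I_k(t)\setminus I_{k+1}(t)|\le M2^k$ times $r^{-j_k}$ does not suffice either.

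\textbf{The missing idea.} Charge each coordinate backwards, to the first level at which $t_i$ separates from the chain, and drop clipping entirely.
\begin{itemize}
\item Define $y(t)_i=\lim_n\pi_{\min(n,\tau(t,i))}(t)_i$. The limit exists because on a good coordinate the chain moves only at type (ii) steps, by at most $r^{-j_l}$, with strictly increasing $j_l$.
\item Put all of $x(t)=t-y(t)$ into $T_1$, including the coordinates with $\tau=\infty$.
\item Let $m(t,i)$ be the first $n$ with $|t_i-\pi_n(t)_i|>r^{-j_n(t)}/2$. For $m\ge1$, the triangle inequality gives $i\in I_{m-1}(t)$. Hence $\tau(t,i)\ge m-1$ and $|x(t)_i|\le\tfrac12r^{-j_{m-1}}+\sum_{j\ge j_{m-1}}r^{-j}\le\tfrac52r^{-j_{m-1}(t)}$.
\item If $m(t,i)=\infty$, then $x(t)_i=0$; one may assume $j_n(t)\to\infty$.
\item The number of $i$ with $m(t,i)=n\ge1$ is at most $4M2^n+M2^{n-1}$. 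On $I_n(t)$, use the level-$n$ energy bound for $t$. On $I_{n-1}(t)\setminus I_n(t)$, use the level-$(n-1)$ bound for the point $\pi_n(t)\in A_{n-1}(t)$.
\item The case $m=0$ follows from Cauchy--Schwarz and \eqref{eq:initialdiam}.
\end{itemize}
Together these give $\norm{x(t)}_1\le CM\sum_n2^nr^{-j_n(t)}$.

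For $T_2$, write $y(t)-y_n(t)=\sum_{l\ge n}(\pi_{l+1}(t)-\pi_l(t))\mathbf{1}_{I_{l+1}(t)}$. On $I_{l+1}(t)$ the increments are at most $r^{-j_l}$. So the level-$l$ energy bound for $\pi_{l+1}(t)\in A_l(t)$ gives each term $\ell^2$-norm at most $\sqrt{M2^l}\,r^{-j_l(t)}$. Summing $2^{n/2}\sum_{l\ge n}\sqrt{M2^l}\,r^{-j_l}$ over $n$ yields \eqref{eq:criteriongamma}.

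You also use the energy bound at type (i) levels and at level $0$ without justification. It follows by induction from \eqref{eq:initialdiam}, since $I_n=I_{n-1}$ at type (i) steps.
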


Our construction has $j_{n+1}-j_n\in\{0,1\}$ and satisfies a clipped
quadratic estimate stronger than~\eqref{eq:criterionenergy}, with $M=r^2$.
It remains to arrange
\begin{equation}\label{eq:targetsum}
 \sup_{t\in T}\sum_{n\ge0}2^n r^{-j_n(t)}\le Cb(T).
\end{equation}

\section{Bernoulli estimates}\label{sec:inputs}

For a countable set $B$ and a map $z:B\to\ell^2(\Lambda)$, write
\[
 b_z(D)=\E\sup_{t\in D}\sum_{i\in\Lambda}z_i(t)\eps_i,
 \qquad \varnothing\ne D\subset B.
\]
This functional is nonnegative, is monotone under inclusion, and is unchanged
by adding a fixed vector to $z$. Removing coordinates can only decrease it,
by conditional Jensen's inequality.

We recall the classical Bernoulli comparison, concentration, and Sudakov
estimates in the forms recorded in~\cite[Theorems~2.2, 2.5, 2.7 and
Proposition~2.8]{BL2014}. If $q_{i,l},p_i:\R\to\R$ satisfy
\begin{equation}\label{eq:contractioncondition}
 \sum_l|q_{i,l}(x)-q_{i,l}(y)|\le|p_i(x)-p_i(y)|
 \quad(i\in\Lambda;\ x,y\in\R),
\end{equation}
then, whenever the coefficient vectors are square summable,
\begin{equation}\label{eq:contraction}
 \E\sup_{t\in B}\sum_{i,l}q_{i,l}(t_i)\eps_{i,l}
 \le\E\sup_{t\in B}\sum_i p_i(t_i)\eps_i.
\end{equation}
This is the multiple-coordinate form of the contraction principle
\cite[Corollary~2.4]{BL2014}.

If $S=\sup_{t\in B}(a_t+\sum_i z_i(t)\eps_i)$ is almost surely finite,
where $(a_t)$ is deterministic and $\sup_t\norm{z(t)}_2\le\sigma$, then
\begin{equation}\label{eq:concentration}
 \Prob\{|S-\E S|\ge u\}\le2\exp\left(-\frac{u^2}{C\sigma^2}\right)
 \qquad(u>0).
\end{equation}
When $\sigma=0$, the assertion is understood in its deterministic sense.
In particular, if $Y_t^1,\ldots,Y_t^m$ are independent copies of a Bernoulli
process with $\sup_t\norm{Y_t^1}_{L^2}\le\sigma$, and $H$ is independent
of these copies, conditioning on $H$ and integrating the union bound give
\begin{equation}\label{eq:copies}
 \E\max_{p\le m}\sup_{t\in B}(H_t+Y_t^p)
 \le\E\sup_{t\in B}(H_t+Y_t^1)+C\sigma\sqrt{\log m},\qquad m\ge2.
\end{equation}

There are universal constants $c_{\rm s}>0$ and $C_{\rm s}>0$ with the
following properties. If $z^1,\ldots,z^m$ have pairwise $\ell^2$ distances
at least $a$ and $\norm{z^p-z^q}_\infty\le\beta$ for all $p,q$, then
\begin{equation}\label{eq:sudakov}
 \E\max_{p\le m}\sum_i z_i^p\eps_i
 \ge c_{\rm s}\min\{a\sqrt{\log m},a^2/\beta\}.
\end{equation}
We use the translation-invariant formulation; it follows by subtracting
one of the vectors. If instead $\norm{z^p}_\infty\le\beta$ and
$H_p\subset B_2(z^p,\sigma)$ are nonempty, then
\begin{equation}\label{eq:growth}
 b\left(\bigcup_{p\le m}H_p\right)
 \ge c_{\rm s}\min\{a\sqrt{\log m},a^2/\beta\}
       -C_{\rm s}\sigma\sqrt{\log m}+\min_{p\le m}b(H_p).
\end{equation}
Decreasing $c_{\rm s}$ if necessary makes both statements valid with the
same constant.

\subsection{Removing coordinates}

The next lemma is a reformulation of~\cite[Proposition~2.10]{BL2014}.
We include a proof to specify the centering and the conditional expectations
used in the selection of random points.

\begin{lemma}[Coordinate removal]\label{lem:removal}
Let $B$ be a nonempty countable set and let
$x:B\to\ell^2(\Lambda_1)$ and $y:B\to\ell^2(\Lambda_2)$, where the
coordinate sets are disjoint. Write $z=(x,y)$ and
$d(s,t)=\norm{z(s)-z(t)}_2$. Suppose $b_z(B)<\infty$ and
\[
 \sup_{s,t\in B}\norm{z(s)-z(t)}_\infty\le\beta,
 \qquad \sup_{s,t\in B}\norm{y(s)-y(t)}_2\le\delta.
\]
For $m\ge2$ and $\rho>0$, there are $t^1,\ldots,t^m\in B$ such that,
with $R=B\setminus\bigcup_{p\le m}B_d(t^p,\rho)$, either $R$ is empty or
\begin{equation}\label{eq:removal}
 b_y(R)\le b_z(B)
 -c_*\min\{\rho\sqrt{\log m},\rho^2/\beta\}
 +C_*\delta\sqrt{\log m}.
\end{equation}
Here $c_*,C_*>0$ are universal constants and $\beta>0$ may be any bound
as above.
\end{lemma}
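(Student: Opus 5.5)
We argue by a greedy selection of centers combined with a random-point argument. We pick the points $t^1,\dots,t^m$ so that the removed set $R$ has small $b_y$. Write $\Delta=c_*\min\{\rho\sqrt{\log m},\rho^2/\beta\}$. The natural route is to follow \cite[Proposition~2.10]{BL2014}: choose each $t^p$ as a near-maximizer of the process restricted to the part of $B$ not yet covered, using conditional expectations.

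Concretely, we fix an enumeration and work with the process $Z_t=\sum_i z_i(t)\eps_i$, splitting it as $X_t+Y_t$ with $X_t$ the $\Lambda_1$-part and $Y_t$ the $\Lambda_2$-part. Take $m$ independent copies $\eps^{(1)},\dots,\eps^{(m)}$ of the $\Lambda_1$-signs and one common copy of the $\Lambda_2$-signs. We choose the points recursively. Let $B^1=B$, and let $t^p$ be a (measurable, near-)maximizer over $B^p$ of $X^{(p)}_t+Y_t$. Set $B^{p+1}=B^p\setminus B_d(t^p,\rho)$. The centers are random, so we will show the desired inequality holds in expectation and then pick a good realization.

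The key steps are as follows.
\begin{enumerate}[label=(\arabic*)]
\item If $R=B^{m+1}\neq\varnothing$, then every $s\in R$ is at $d$-distance greater than $\rho$ from every center $t^p$.
\item The points $t^1,\dots,t^m$ are therefore $\rho$-separated in $d$ and have $\ell^\infty$-differences at most $\beta$.
\item Applying the Sudakov bound \eqref{eq:sudakov}, or rather the growth form \eqref{eq:growth} conditioned on the common $\Lambda_2$-signs, yields a lower bound on $\E\max_p\sup_{B^p}(X^{(p)}+Y)$ of the form $b_y(R)+\Delta-C\delta\sqrt{\log m}$.
\item The term $C\delta\sqrt{\log m}$ absorbs the fluctuation of the $y$-parts. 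Here we use that the $y$-diameter is at most $\delta$, through concentration \eqref{eq:concentration}.
\item For the upper bound, since $B^p\subset B$, we use \eqref{eq:copies} with $H=Y$ and the copies $X^{(p)}$. Because the $\Lambda_1$- and $\Lambda_2$-signs are independent, this gives $\E\max_p\sup_B(X^{(p)}+Y)\le b_z(B)+C\sigma\sqrt{\log m}$.
\end{enumerate}

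A correct centering is needed in step (5): the $x$-parts must be centered at a fixed point, so that $\sigma$ is controlled. Comparing the two bounds gives \eqref{eq:removal} after adjusting constants.

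The main obstacle is step (5). The $\sigma\sqrt{\log m}$ term there involves the full $x$-diameter, which is not small. To handle this, we instead compare increments along the greedy sequence. We use that $b_y(B^{p+1})$ is controlled by the conditional expectation given the first $p$ rounds. The separation then forces a gain of order $\min\{\rho\sqrt{\log m},\rho^2/\beta\}$ only in the $y$-free directions. The error is paid only through $\delta$, since restricting to the $y$-part costs nothing by Jensen.

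If this sequential argument proves too delicate, a fallback is available. Apply \eqref{eq:growth} directly to $H_p=B_d(t^p,\rho)\cap B$ together with $R$. Choose the $t^p$ greedily to maximize $b_z$ of the covered pieces. Then use that $b_y(R)\le b_z(R)$ and that $b_z(R)$ is attained up to $\delta\sqrt{\log m}$ within any $m$ well-separated pieces.
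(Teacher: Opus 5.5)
\textbf{There is a genuine gap.} You put the independent copies on the wrong block, and neither proposed repair closes it.

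\emph{The upper bound.} The cost in \eqref{eq:copies} is $C\sigma\sqrt{\log m}$, where $\sigma$ is the $L^2$-size of the \emph{copied} process. So only the $y$-block can be copied cheaply: after subtracting $z(w)$ for a fixed $w\in B$, one has $\sup_t\norm{y(t)}_2\le\delta$. Copying $x$ leaves an error of the size of the $x$-diameter, which the hypotheses do not control, as you yourself observe in step (5).

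\emph{The lower bound.} Sudakov's inequality \eqref{eq:sudakov} concerns one process evaluated at separated points. The gain must therefore come from a \emph{common} $x$-process, evaluated at centers chosen independently of it. Your $t^p$ maximize $X^{(p)}+Y$, so they depend on the very signs from which the gain would be extracted. Independent copies $X^{(p)}$ produce no separation-driven gain, and steps (3)--(4) never explain how $b_y(R)$ arises.

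\emph{The fallback.} Applying \eqref{eq:growth} to pieces $B\cap B_d(t^p,\rho)$ around $\rho$-separated centers produces a loss $C_{\rm s}\rho\sqrt{\log m}$ that swamps the gain $c_{\rm s}\rho\sqrt{\log m}$. Moreover, $R$ itself lies in no small ball. More fundamentally, routing through $b_y(R)\le b_z(R)$ would require $b_z(R)\le b_z(B)-\Delta+C\delta\sqrt{\log m}$, which is false in general. Take $y\equiv0$, $x(B)=\{-a,a\}^n$, $\rho=2a\sqrt k$ and $\beta=2a$. The maximizer $a\eps$ lies in the $m$ removed balls with probability at most $m(n+1)^k2^{-n}$. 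Hence $b_z(B)-b_z(R)\le 2na\,m(n+1)^k2^{-n}\to0$, while $\Delta$ does not depend on $n$. Discarding the $x$-coordinates is exactly what makes the lemma true.

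\textbf{The argument that works (the paper's) reverses your roles.}
\begin{enumerate}[label=(\alph*)]
\item Dispose of $\delta\ge\rho/2$ via $b_y(R)\le b_y(B)\le b_z(B)$ and a large $C_*$. Assume no $m$ balls cover $B$, so all remainders are nonempty.
\item Let $H_t=\sum_{i\in\Lambda_1}x_i(t)\eps_i$ be a single process and $Y^1,\dots,Y^m$ independent copies of the centered $y$-process. Then \eqref{eq:copies} gives $V:=\E\max_p\sup_B(H+Y^p)\le b_z(B)+C\delta\sqrt{\log m}$.
\item Choose the centers from the $y$-copies only: $t^p\in R_p=B\setminus\bigcup_{l<p}B_d(t^l,\rho)$ with $Y^p_{t^p}\ge S_p-\xi$, where $S_p=\sup_{R_p}Y^p$.
\item Since $R_p$ depends only on the first $p-1$ copies, $\E(S_p\mid\mathscr G_{p-1})=b_y(R_p)\ge a_*$, where $a_*$ is the infimum of $b_y$ over remainders of $m$ balls. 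This is where $b_y(R)$ enters.
\item Conditional concentration with $\sigma=\delta$ and a union bound give $\E\max_p(b_y(R_p)-S_p)_+\le C\delta\sqrt{\log m}$.
\item Conditionally on all $Y^p$, the $t^p$ are fixed and $H$ is untouched. Their $d$-distances exceed $\rho$ and their $y$-distances are at most $\delta<\rho/2$, so their $x$-distances are at least $\rho/2$. Mere $d$-separation, as in your step (2), would not suffice. Sudakov then gives $\E_H\max_pH_{t^p}\ge\frac{c_{\rm s}}4L_0$ with $L_0=\min\{\rho\sqrt{\log m},\rho^2/\beta\}$.
\item Hence $V\ge\E\max_p(H_{t^p}+Y^p_{t^p})\ge\frac{c_{\rm s}}4L_0+a_*-C\delta\sqrt{\log m}-\xi$. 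Compare with the upper bound in (b), let $\xi\downarrow0$, and fix deterministic centers whose remainder nearly attains $a_*$; this gives \eqref{eq:removal}.
\end{enumerate}
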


\begin{proof}
If $m$ balls cover $B$, there is nothing to prove. Assume otherwise, so
every remainder appearing below is nonempty. Put
\[
 L_0=\min\{\rho\sqrt{\log m},\rho^2/\beta\},\qquad
 a_* =\inf_{t^1,\ldots,t^m\in B}
 b_y\left(B\setminus\bigcup_{p\le m}B_d(t^p,\rho)\right).
\]
If $\delta\ge\rho/2$, the conclusion follows from
$b_y(R)\le b_y(B)\le b_z(B)$ by choosing $C_*$ sufficiently large.
Hence suppose $\delta<\rho/2$.

Fix $w\in B$ and subtract $z(w)$ from all coefficient vectors. This does
not change any distance or expected supremum and ensures
$\sup_t\norm{y(t)}_2\le\delta$. Define
\[
 H_t=\sum_{i\in\Lambda_1}x_i(t)\eps_i,
 \qquad Y_t^p=\sum_{i\in\Lambda_2}y_i(t)\eps_i^p,
 \qquad 1\le p\le m,
\]
using independent families of signs. By~\eqref{eq:copies},
\begin{equation}\label{eq:removalupper}
 V:=\E\max_{p\le m}\sup_{t\in B}(H_t+Y_t^p)
 \le b_z(B)+C\delta\sqrt{\log m}.
\end{equation}

Let $\mathscr G_p$ be generated by the first $p$ copies of $Y$, and fix
$\xi>0$. Inductively set
\[
 R_p=B\setminus\bigcup_{l<p}B_d(t^l,\rho),\qquad
 S_p=\sup_{t\in R_p}Y_t^p,
\]
and choose a $\mathscr G_p$-measurable $t^p\in R_p$ with
$Y_{t^p}^p\ge S_p-\xi$. A fixed enumeration of $B$ gives such a measurable
choice. The set $R_p$ is $\mathscr G_{p-1}$-measurable and is independent of
the $p$th copy. Therefore
\[
 \mu_p:=\E(S_p\mid\mathscr G_{p-1})=b_y(R_p)\ge a_*.
\]
The last inequality follows by adding further balls to obtain a remainder
of $m$ balls contained in $R_p$. Applying~\eqref{eq:concentration}
conditionally on $\mathscr G_{p-1}$, and then using a union bound, gives
\begin{equation}\label{eq:adaptiveconcentration}
 \E\max_{p\le m}(\mu_p-S_p)_+
 \le C\delta\sqrt{\log m}.
\end{equation}

Conditionally on all copies of $Y$, the points $t^p$ are fixed and the
process $H$ is unchanged. Their full distances exceed $\rho$, while their
$y$-distances are at most $\delta$. Hence their $x$-distances are at least
$\rho/2$. By~\eqref{eq:sudakov},
\[
 \E_H\max_{p\le m}H_{t^p}\ge\frac{c_{\rm s}}4 L_0.
\]
It follows from~\eqref{eq:adaptiveconcentration} that
\[
 V\ge\E\max_{p\le m}(H_{t^p}+Y_{t^p}^p)
 \ge\frac{c_{\rm s}}4L_0+a_*-C\delta\sqrt{\log m}-\xi.
\]
Combine this with~\eqref{eq:removalupper} and let $\xi\downarrow0$.
Finally choose deterministic centers whose remainder has $b_y$-value within
$c_{\rm s}L_0/8$ of $a_*$. This gives~\eqref{eq:removal} with
$c_*=c_{\rm s}/8$ and a suitable $C_*$.
\end{proof}

\section{Truncations and subdivisions}\label{sec:truncation}

For $r\ge2$ and $j\in\Z$, define
\[
 \clip{x}{j}=\max\{-3r^{-j},\min\{x,3r^{-j}\}\}.
\]
For $I\subset\N$, $u\in\ell^2$, and nonempty $D\subset\ell^2$, put
\begin{align}
 \varphi_{I,u,j}(t)&=(\clip{t_i-u_i}{j})_{i\in I},\notag\\
 F_{I,u,j}(D)&=\E\sup_{t\in D}\sum_{i\in I}\clip{t_i-u_i}{j}\eps_i,
 \label{eq:functional}\\
 d_{I,u,j}(s,t)&=\norm{\varphi_{I,u,j}(s)-\varphi_{I,u,j}(t)}_2.
 \label{eq:metric}
\end{align}
These coefficient vectors are square summable. Contraction and translation
invariance give
\begin{equation}\label{eq:boundedfunctional}
 0\le F_{I,u,j}(D)\le b(D).
\end{equation}
The distances in~\eqref{eq:metric} may be pseudometrics, which causes no
difficulty in the constructions below.

\subsection{Splitting a clipped coordinate}

\begin{lemma}\label{lem:split}
Fix $I,u,j$ and $v\in\ell^2$, and set
\[
 J=\{i\in I:|v_i-u_i|\le r^{-j}\}.
\]
For $i\in J$, write
\[
 f_i(s)=\clip{s-u_i}{j},\qquad
 g_i(s)=\clip{s-v_i}{j+1},\qquad \theta_i(s)=f_i(s)-g_i(s).
\]
Both $g_i$ and $\theta_i$ are nondecreasing. If
\begin{equation}\label{eq:splitmap}
 \Psi(t)=\big((f_i(t_i))_{i\in I\setminus J},
                 (\theta_i(t_i))_{i\in J},(g_i(t_i))_{i\in J}\big),
\end{equation}
where $f_i(s)=\clip{s-u_i}{j}$ also for $i\in I\setminus J$, and
$\bar d(s,t)=\norm{\Psi(s)-\Psi(t)}_2$, then
\begin{equation}\label{eq:metriccomparison}
 \tfrac12 d_{I,u,j}(s,t)^2\le\bar d(s,t)^2
                \le d_{I,u,j}(s,t)^2.
\end{equation}
For every nonempty countable $D$,
\begin{equation}\label{eq:splitcomparison}
 F_{J,v,j+1}(D)\le b_\Psi(D)\le F_{I,u,j}(D).
\end{equation}
Moreover,
\begin{equation}\label{eq:energytransfer}
 \norm{\varphi_{J,v,j+1}(t)}_2^2\le d_{I,u,j}(t,v)^2.
\end{equation}
\end{lemma}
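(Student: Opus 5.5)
Everything reduces to a single coordinate $i\in J$, since the coordinates in $I\setminus J$ appear identically in $\Psi$ and in $\varphi_{I,u,j}$.

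\emph{Monotonicity.} Fix $i\in J$ and write $f=f_i$, $g=g_i$, $\theta=\theta_i$. Clipping is nondecreasing, so $g$ is nondecreasing. For $\theta=f-g$ it suffices to check that $f$ increases at least as fast as $g$ on every interval. Both maps are piecewise linear with slope $0$ or $1$. Let $|v_i-u_i|\le r^{-j}$ and $r\ge2$. Then $g$ has slope $1$ exactly on
\[
 [v_i-3r^{-j-1},\,v_i+3r^{-j-1}],
\]
which lies inside $[u_i-r^{-j}-\tfrac32 r^{-j},\,u_i+\tfrac52 r^{-j}]\subset[u_i-3r^{-j},u_i+3r^{-j}]$. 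This last interval is where $f$ has slope $1$. So wherever $g'=1$ we also have $f'=1$, and $\theta'=f'-g'\in\{0,1\}$. Hence $\theta$ is nondecreasing.

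\emph{Metric comparison \eqref{eq:metriccomparison}.} Fix $s<t$ and put $a=\theta(t)-\theta(s)\ge0$ and $b=g(t)-g(s)\ge0$. Then $f(t)-f(s)=a+b$. By monotonicity $a,b\ge0$, so
\[
 \tfrac12(a+b)^2\le a^2+b^2\le(a+b)^2.
\]
Summing over coordinates, with the untouched coordinates of $I\setminus J$ contributing equally to both sides, gives \eqref{eq:metriccomparison}.

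\emph{Expected suprema \eqref{eq:splitcomparison}.} Both inequalities come from the contraction principle \eqref{eq:contraction}.

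For the upper bound, take $p_i=f_i$. Let the maps $q_{i,l}$ be $f_i$ for $i\notin J$, and the pair $\theta_i,g_i$ for $i\in J$. Condition \eqref{eq:contractioncondition} then reads
\[
 |\theta_i(x)-\theta_i(y)|+|g_i(x)-g_i(y)|=|f_i(x)-f_i(y)|,
\]
which holds with equality by the monotonicity just established. This gives $b_\Psi(D)\le F_{I,u,j}(D)$.

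For the lower bound, removing coordinates only decreases the functional, by conditional Jensen. Dropping all coordinates of $\Psi$ except the $g_i$ with $i\in J$ leaves exactly the process defining $F_{J,v,j+1}(D)$.

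\emph{Energy transfer \eqref{eq:energytransfer}.} First, $g_i(v_i)=0$, so $\norm{\varphi_{J,v,j+1}(t)}_2^2=\sum_{i\in J}(g_i(t_i)-g_i(v_i))^2$. Second, $|g_i(x)-g_i(y)|\le|f_i(x)-f_i(y)|$, which is the contraction inequality from the previous step. Combining these,
\[
 \sum_{i\in J}(g_i(t_i)-g_i(v_i))^2\le\sum_{i\in J}(f_i(t_i)-f_i(v_i))^2\le d_{I,u,j}(t,v)^2.
\]

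The main obstacle is the monotonicity of $\theta_i$. It is the only place where the constants (the width $3r^{-j}$, the shift bound $r^{-j}$, and $r\ge2$) matter, and everything else follows from it.
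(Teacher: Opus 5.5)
Your proof is correct and follows the paper's argument essentially step by step. The nesting $[v_i-3r^{-j-1},v_i+3r^{-j-1}]\subset[u_i-3r^{-j},u_i+3r^{-j}]$ gives the monotonicity of $g_i$ and $\theta_i$. The nonnegative increments summing to $f_i(t)-f_i(s)$ then give both \eqref{eq:metriccomparison} and the contraction condition, and dropping blocks gives the lower bound. The only cosmetic difference is that you derive \eqref{eq:energytransfer} from the coordinatewise inequality $|g_i(x)-g_i(y)|\le|f_i(x)-f_i(y)|$ rather than from the upper bound in \eqref{eq:metriccomparison}, which amounts to the same thing.
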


\begin{proof}
The interval $[v_i-3r^{-j-1},v_i+3r^{-j-1}]$ is contained in
$[u_i-3r^{-j},u_i+3r^{-j}]$, because $r\ge2$ and
$|v_i-u_i|\le r^{-j}$. Thus $g_i$ is increasing only where $f_i$ has slope
one, and $f_i-g_i$ is nondecreasing.

For $s\ge t$, the two increments $g_i(s)-g_i(t)$ and
$\theta_i(s)-\theta_i(t)$ are nonnegative and sum to $f_i(s)-f_i(t)$.
Hence
\[
 \tfrac12(f_i(s)-f_i(t))^2
 \le (g_i(s)-g_i(t))^2+(\theta_i(s)-\theta_i(t))^2
 \le(f_i(s)-f_i(t))^2.
\]
Summing proves~\eqref{eq:metriccomparison}. The same observation gives
\eqref{eq:contractioncondition}, and therefore the second inequality
in~\eqref{eq:splitcomparison}. The first follows by removing the first two
blocks of $\Psi$. Finally $g_i(v_i)=0$, so~\eqref{eq:energytransfer}
follows from the upper bound in~\eqref{eq:metriccomparison}.
\end{proof}

In particular, the comparison of functionals in
\eqref{eq:splitcomparison} holds on every descendant subset, regardless of
whether the new center belongs to that subset.

\subsection{The first subdivision}

\begin{lemma}[Greedy subdivision]\label{lem:greedy}
There are universal constants $K\ge1$ and $\eta_{\rm G}>0$ with the
following property. Fix $q\ge1$, $I,u,j$, and a nonempty countable set $A$
with $F_{I,u,j}(A)<\infty$. Let $m=N_{q-1}$, or, when $q\ge2$, let
$m=N_{q-2}$. Put
\[
 d=d_{I,u,j},\qquad R_q=2^{q/2}r^{-j}.
\]
There is a partition into sets $C_1,\ldots,C_L$ and a possible remainder
$R$, where $L\le m-1$, such that each $C_l$ has a center $w_l\in C_l$ and
\begin{equation}\label{eq:greedysmall}
 C_l\subset B_d(w_l,R_q).
\end{equation}
If $R\ne\varnothing$, then for every $t\in R$ and every nonempty
$D\subset R\cap B_d(t,R_q/K)$,
\begin{equation}\label{eq:greedygain}
 F_{I,u,j}(A)\ge F_{I,u,j}(D)+\eta_{\rm G}2^q r^{-j}.
\end{equation}
\end{lemma}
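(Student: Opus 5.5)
We build the partition greedily, by repeatedly removing balls that carry nearly maximal Bernoulli mass. The remainder we are left with will then satisfy \eqref{eq:greedygain} by a Sudakov-type growth argument in the style of \eqref{eq:growth}. Work with the process $Z_t=\sum_{i\in I}\clip{t_i-u_i}{j}\eps_i$ on $A$, so that $F=F_{I,u,j}$ and $d=d_{I,u,j}$ is its canonical metric. All coefficient vectors have $\ell^\infty$-differences at most $\beta=6r^{-j}$.

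\textbf{Construction.} Put $R^{(0)}=A$. At step $l\le m-1$, if $R^{(l-1)}\ne\varnothing$, choose a point $w_l\in R^{(l-1)}$ and set $C_l=R^{(l-1)}\cap B_d(w_l,R_q)$ and $R^{(l)}=R^{(l-1)}\setminus C_l$. The choice of $w_l$ is nearly optimal: it maximizes, up to a factor $1/2$ (or up to an additive slack small compared with $2^qr^{-j}$), the quantity
\[
 \sup\bigl\{F(D): D\subset R^{(l-1)}\cap B_d(w,R_q/K)\bigr\},\qquad w\in R^{(l-1)} .
\]
Since $\sup F$ over such $D$ is just $F(R^{(l-1)}\cap B_d(w,R_q/K))$ by monotonicity, we are choosing the small ball of largest $F$-value. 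Stop after $L\le m-1$ steps, or earlier if the set is exhausted, and let $R=R^{(L)}$. Then \eqref{eq:greedysmall} holds by construction, and it remains to prove \eqref{eq:greedygain}.

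\textbf{Gain.} Suppose $R\ne\varnothing$. Then all $m-1$ steps were performed. Fix $t\in R$ and a nonempty $D\subset R\cap B_d(t,R_q/K)$. By greediness, every $w_l$ satisfies $F(H_l)\ge \tfrac12 F(D)$ (or $\ge F(D)-\text{slack}$), where $H_l=R^{(l-1)}\cap B_d(w_l,R_q/K)$. Consider the $m$ sets $H_1,\ldots,H_{m-1}$ and $H_m=D$. Their centers $w_1,\ldots,w_{m-1},t$ are pairwise at $d$-distance greater than $R_q$, since each later center lies outside the earlier large balls. Each set lies in a $d$-ball of radius $\sigma=R_q/K$ around its center, after translating the coefficient vectors by $\varphi(u)$. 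Apply \eqref{eq:growth} to the process $z=\varphi_{I,u,j}$ with $a=R_q$, $\beta=6r^{-j}$ and $\sigma=R_q/K$. Since $\log m\asymp 2^{q}$ (for $m=N_{q-1}$ or $N_{q-2}$), we have $a\sqrt{\log m}\asymp 2^{q}r^{-j}$ and $a^2/\beta\asymp 2^q r^{-j}$. Hence
\[
 F(A)\ge c\,2^qr^{-j}-C_{\rm s}K^{-1}C2^{q}r^{-j}+\min_l F(H_l).
\]
Choosing $K$ large makes the loss term at most half the gain.

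\textbf{Main obstacle.} The difficulty is the term $\min_l F(H_l)$. To conclude we need $\min_lF(H_l)\ge F(D)$ exactly (up to a lower-order term), not merely $\tfrac12F(D)$. I will therefore use an additive near-maximizer: pick $w_l$ with
\[
 F\bigl(R^{(l-1)}\cap B_d(w_l,R_q/K)\bigr)\ge \sup_{w} F\bigl(R^{(l-1)}\cap B_d(w,R_q/K)\bigr)-\tfrac{c}{4}2^qr^{-j},
\]
which is possible because $F(A)<\infty$. Since $D\subset R\subset R^{(l-1)}$ and $D\subset B_d(t,R_q/K)$, monotonicity gives $F(H_l)\ge F(D)-\tfrac c4 2^qr^{-j}$ for every $l$. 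Substituting into the growth bound yields \eqref{eq:greedygain} with $\eta_{\rm G}=c/4$.

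Two technical points need checking along the way. First, \eqref{eq:growth} is stated for $b$ of sets rather than for $F$, so we apply it to the image sets $\varphi(H_l)\subset\ell^2(I)$. Second, the one-index shift between $N_{q-1}$ and $N_{q-2}$ only changes constants.
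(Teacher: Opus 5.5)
Your proposal is correct and matches the paper's proof. Both remove $R_q$-balls greedily, with centers chosen as additive near-maximizers of $F(R^{(l-1)}\cap B_d(w,R_q/K))$; the paper uses slack $c_{\rm s}2^qr^{-j}/12$. Both then apply \eqref{eq:growth} to the $m-1$ small balls together with $D$ centered at $t$, choosing $K$ to absorb the concentration loss.

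One cosmetic point: \eqref{eq:growth} is phrased with the sup-norm bound $\norm{\varphi_{I,u,j}(w)}_\infty\le 3r^{-j}$ rather than your difference bound $\beta=6r^{-j}$. Since $3r^{-j}\le 6r^{-j}$, this only changes constants. The translation by $\varphi(u)$ you mention is vacuous, because $\varphi_{I,u,j}(u)=0$.
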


\begin{proof}
Put $h=r^{-j}$ and $\epsilon=c_{\rm s}2^qh/12$. Starting with $S_1=A$,
choose $w_l\in S_l$ such that
\[
 F_{I,u,j}(S_l\cap B_d(w_l,R_q/K))
 \ge\sup_{w\in S_l}F_{I,u,j}(S_l\cap B_d(w,R_q/K))-\epsilon.
\]
Set $C_l=S_l\cap B_d(w_l,R_q)$ and $S_{l+1}=S_l\setminus C_l$.
Stop if the remaining set is empty; otherwise continue through $l=m-1$
and put $R=S_m$. This proves~\eqref{eq:greedysmall} and the cardinality
assertion. Approximate maximizers suffice, so no compactness assumption is
needed.

Suppose $R\ne\varnothing$, fix $t\in R$, and let $D$ be as in the
statement. Take $w_m=t$ and
\[
 H_l=\varphi_{I,u,j}(S_l\cap B_d(w_l,R_q/K))\quad(l<m),
 \qquad H_m=\varphi_{I,u,j}(D).
\]
The coefficient vectors at $w_1,\ldots,w_m$ have pairwise distances greater
than $R_q$ and have sup-norm at most $3h$. Each $H_l$ lies within distance
$R_q/K$ of the corresponding center. Since $t\in S_l$ and
$D\subset S_l\cap B_d(t,R_q/K)$ for $l<m$, the choice of $w_l$ gives
\[
 \min_{l\le m}b(H_l)\ge F_{I,u,j}(D)-\epsilon.
\]
For either allowed value of $m$,
\[
 (\log2)2^{q-2}\le\log m\le(\log2)2^{q-1},\qquad
 \min\{R_q\sqrt{\log m},R_q^2/(3h)\}=\tfrac13 2^qh.
\]
Apply~\eqref{eq:growth} and choose
$K\ge\max\{1,12C_{\rm s}/c_{\rm s}\}$. The concentration error is at most
$c_{\rm s}2^qh/12$, as is $\epsilon$. Thus~\eqref{eq:greedygain} holds
with $\eta_{\rm G}=c_{\rm s}/6$.
\end{proof}

\subsection{Choice of parameters}\label{sec:parameters}

Fix $K$ as in Lemma~\ref{lem:greedy}. Choose a sufficiently large positive
integer $\ell$, and set
\begin{equation}\label{eq:parameters}
 r=2^{2\ell},\qquad \kappa=3\ell,\qquad \alpha=2^{1/3}.
\end{equation}
Thus
\begin{equation}\label{eq:parameteridentities}
 2^\kappa=r^{3/2},\qquad (2/\alpha)^\kappa=r,
 \qquad 1<\alpha<2,\qquad r>2\alpha.
\end{equation}
We take $\ell$ large enough that
\begin{equation}\label{eq:rchoice}
 2C_*\sqrt{\log2}\,r^{-1/4}\le\frac{c_*}{24K^2},
\end{equation}
where $c_*,C_*$ are from Lemma~\ref{lem:removal}. Set
\begin{equation}\label{eq:gainconstants}
 \eta_{\rm W}=\frac{c_*}{24K^2},\qquad
 \eta=\min\{\eta_{\rm G},\eta_{\rm W}\}.
\end{equation}
All these parameters are now fixed universal constants. The constant $M$
in Theorem~\ref{thm:criterion} plays no role in their choice.

\subsection{The second subdivision}

\begin{lemma}\label{lem:second}
Fix $I,u,j,v$ and $J$ as in Lemma~\ref{lem:split}. Let $n\ge1$ and
$2\le k\le\kappa$. Suppose $B$ is nonempty and countable, $w\in B$, and
\begin{equation}\label{eq:finesmall}
 \sup_{t\in B}d_{J,v,j+1}(t,w)^2
          \le2^{n+k}r^{-2(j+1)}.
\end{equation}
There is a partition of $B$ into at most $N_n$ sets $D_p$ and a possible
remainder $E$ such that each $D_p$ has a point $t^p\in B$ with
\begin{equation}\label{eq:oldsmall}
 D_p\subset B_{d_{I,u,j}}(t^p,K^{-1}2^{n/2}r^{-j}).
\end{equation}
If $E\ne\varnothing$, then
\begin{equation}\label{eq:secondgain}
 F_{I,u,j}(B)\ge F_{J,v,j+1}(E)+\eta_{\rm W}2^n r^{-j}.
\end{equation}
Here it suffices to assume $F_{I,u,j}(B)<\infty$.
\end{lemma}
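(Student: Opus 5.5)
We will apply Lemma~\ref{lem:removal} to the split process of Lemma~\ref{lem:split}. Put $z=\Psi$ on $B$ and split the coordinates into $y$, the $g$-block $(g_i(t_i))_{i\in J}$, and $x$, the remaining two blocks. Then $y(t)=\varphi_{J,v,j+1}(t)$, so $b_y=F_{J,v,j+1}$ and $b_z=b_\Psi$. By~\eqref{eq:splitcomparison}, $b_z(B)\le F_{I,u,j}(B)<\infty$. The needed hypotheses are checked as follows.
\begin{itemize}
\item The sup-norm bound: each coordinate of $\Psi$ is a clipped quantity or a difference of two clipped quantities. Since the increments of $\theta_i$ and $g_i$ are bounded by those of $f_i$, we may take $\beta=6r^{-j}$.
\item The $y$-diameter: by~\eqref{eq:finesmall} and the triangle inequality, $\delta=2\cdot 2^{(n+k)/2}r^{-(j+1)}$.
\end{itemize}
We take $m=N_n$, so $\log m=2^n\log 2$, and choose $\rho=\sqrt2\,K^{-1}2^{n/2}r^{-j}$. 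The lemma yields points $t^1,\dots,t^m\in B$ and the remainder $E=B\setminus\bigcup_p B_{\bar d}(t^p,\rho)$. We then define $D_p$ as the disjointified balls $B_{\bar d}(t^p,\rho)\cap B$ with earlier balls removed, discarding empty ones. By the lower bound in~\eqref{eq:metriccomparison}, $d_{I,u,j}\le\sqrt2\,\bar d$. Hence $D_p\subset B_{d_{I,u,j}}(t^p,\sqrt2\rho)$. This gives~\eqref{eq:oldsmall} with constant $2K^{-1}$ rather than $K^{-1}$. To get exactly $K^{-1}$ we shrink $\rho$ to $\rho=K^{-1}2^{n/2}r^{-j}/\sqrt2$. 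This only changes the constants below by a factor of $2$.

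Next we compute the gain term. We have $\rho\sqrt{\log m}\asymp K^{-1}2^n r^{-j}$ and $\rho^2/\beta\asymp K^{-2}2^n r^{-j}$. So, since $K\ge1$, the minimum is at least roughly $K^{-2}2^n r^{-j}/12$. A careful count with $\rho^2=K^{-2}2^n r^{-2j}/2$ and $\beta=6r^{-j}$ gives $\rho^2/\beta=K^{-2}2^nr^{-j}/12$, and the other term is larger. Thus $c_*\min\{\cdot\}\ge \frac{c_*}{12K^2}2^nr^{-j}$. The error term is $C_*\delta\sqrt{\log m}=2C_*\sqrt{\log2}\,2^{n}2^{k/2}r^{-j-1}$. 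Since $k\le\kappa$, we have $2^{k/2}\le 2^{\kappa/2}=r^{3/4}$ by~\eqref{eq:parameteridentities}. So the error is at most $2C_*\sqrt{\log2}\,r^{-1/4}2^nr^{-j}\le\frac{c_*}{24K^2}2^nr^{-j}$ by~\eqref{eq:rchoice}. The net gain is therefore at least $\frac{c_*}{24K^2}2^nr^{-j}=\eta_{\rm W}2^nr^{-j}$.

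Combining this with $b_z(B)\le F_{I,u,j}(B)$ from~\eqref{eq:splitcomparison} gives~\eqref{eq:secondgain} whenever $E\neq\varnothing$. The count of sets is at most $m=N_n$.

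The main obstacle is the constant bookkeeping. The factor $\sqrt2$ lost in~\eqref{eq:metriccomparison}, the value of $\beta$, and the balance between $\delta\sqrt{\log m}$ and $\rho^2/\beta$ must fit exactly into the choice~\eqref{eq:rchoice}. The factor $r^{-1}$ in $\delta$ has to absorb $2^{\kappa/2}=r^{3/4}$, which is precisely why $\kappa$ was tied to $r$. If the $\beta$ constant comes out worse, we would use $\rho$ with a slightly different absolute factor. This changes only the numerical constant in front of $K^{-2}$, and that constant is absorbed by the margin between $1/12$ and $1/24$.
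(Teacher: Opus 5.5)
Your proposal is correct and follows essentially the paper's proof: the paper also applies Lemma~\ref{lem:removal} to the split process, with $m=N_n$, $\beta=6r^{-j}$ and $\rho=K^{-1}2^{(n-1)/2}r^{-j}$ (your corrected radius), balances the gain $c_*/(12K^2)$ against the error $2C_*\sqrt{\log2}\,r^{-1/4}$ via \eqref{eq:rchoice}, and uses \eqref{eq:metriccomparison} to turn $\bar d$-balls into $d_{I,u,j}$-balls. The only difference is cosmetic: the paper centers the $g$-block at $w$, which changes nothing because Lemma~\ref{lem:removal} centers internally; your closing hedge about a worse $\beta$ is unnecessary, since $\beta=6r^{-j}$ is exactly right.
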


\begin{proof}
Use $f_i,g_i,\theta_i$ from Lemma~\ref{lem:split} and form the centered
three-block map
\begin{equation}\label{eq:phimap}
 \Phi(t)=\big((f_i(t_i))_{i\in I\setminus J},
       (\theta_i(t_i))_{i\in J},(g_i(t_i)-g_i(w_i))_{i\in J}\big).
\end{equation}
Let $y$ denote its last block. Then $b_\Phi(B)\le F_{I,u,j}(B)$ and
$b_y(E)=F_{J,v,j+1}(E)$. The full canonical distance remains $\bar d$.
Every coordinate increment of $\Phi$ is at most $6r^{-j}$, and
\eqref{eq:finesmall} gives
\[
 \diam_2(y(B))\le2\delta,\qquad
 \delta=2^{(n+k)/2}r^{-j-1}
       \le2^{n/2}r^{-j}r^{-1/4},
\]
where the last inequality uses $2^k\le2^\kappa=r^{3/2}$.

Apply Lemma~\ref{lem:removal} with
\[
 m=N_n,\qquad \rho=K^{-1}2^{(n-1)/2}r^{-j},\qquad
 \beta=6r^{-j}.
\]
Since $\log m=(\log2)2^n$ and $K\ge1$,
\[
 \min\{\rho\sqrt{\log m},\rho^2/\beta\}
       =\frac{2^n r^{-j}}{12K^2}.
\]
Consequently, either the $m$ balls cover $B$, or their remainder $E$ satisfies
\begin{align*}
 F_{I,u,j}(B)
 &\ge F_{J,v,j+1}(E)
   +\left(\frac{c_*}{12K^2}
                -2C_*\sqrt{\log2}\,r^{-1/4}\right)2^n r^{-j}\\
 &\ge F_{J,v,j+1}(E)+\eta_{\rm W}2^n r^{-j}.
\end{align*}
Disjointify the balls in any order to obtain the sets $D_p$. By
\eqref{eq:metriccomparison}, a $\bar d$-ball of radius $\rho$ is contained
in a $d_{I,u,j}$-ball of radius $\sqrt2\rho$, giving~\eqref{eq:oldsmall}.
\end{proof}

\section{The partition construction}\label{sec:construction}

Assume for now that $T$ is countable, $0\in T$, and $0<b(T)<\infty$.
The elementary estimate
\begin{equation}\label{eq:diameter}
 \diam_2(T)\le4b(T)
\end{equation}
follows by applying the $L^1$ Khintchine inequality to $X_s-X_t$ and using
$\E\max\{X_s,X_t\}=\frac12\E|X_s-X_t|$; see
\cite[Lemma~2.1]{BL2014}. Choose $j_0\in\Z$ such that
\begin{equation}\label{eq:jzero}
 r^{-j_0}\le\diam_2(T)\le r^{-j_0+1}.
\end{equation}
Start with
\[
 \A_0=\{T\},\qquad j_0(T)=j_0,\qquad \pi_0(T)=0,\qquad I_0(T)=\N.
\]

\Needspace{5\baselineskip}
We construct refining partitions and their data with the following
properties:
\begin{enumerate}[label=\textup{(P\arabic*)},leftmargin=2.8em]
\item\label{P:levels} $j_q-j_{q-1}\in\{0,1\}$ on every branch.
\item\label{P:centers} If the level is unchanged, so are the center and
the coordinate set. If it increases, the new center belongs to the parent
and
\begin{equation}\label{eq:indexupdate}
 I_q(t)=\{i\in I_{q-1}(t):
 |\pi_q(t)_i-\pi_{q-1}(t)_i|\le r^{-j_{q-1}(t)}\}.
\end{equation}
\item\label{P:energy} For every $A\in\A_q$ and $t\in A$,
\begin{equation}\label{eq:inductiveenergy}
 \norm{\varphi_{I_q(A),\pi_q(A),j_q(A)}(t)}_2^2
               \le2^q r^{-2(j_q(A)-1)}.
\end{equation}
\end{enumerate}
The initial energy estimate follows from $0\in T$ and~\eqref{eq:jzero}.
Property~\ref{P:centers} implies that $I_q$ is exactly the set defined
by~\eqref{eq:tracking}.

Suppose the construction has reached level $q-1$, and fix a parent
$A\in\A_{q-1}$. Its entire preceding history is constant on $A$.
We use the second rule below precisely when there is an integer $n$ with
\begin{equation}\label{eq:trigger}
 n\ge1,\quad 2\le q-n\le\kappa,\quad
 j_{n-1}=j_n,
 \qquad j_n+1=j_{n+1}=\cdots=j_{q-1}.
\end{equation}
Such an $n$, if it exists, is unique: $n+1$ is the last increase of the
level before $q$. If there is no such $n$, we use the first rule. This
convention makes the two cases exhaustive.

\subsection[First rule]{First rule: no index satisfies~\eqref{eq:trigger}}

Write $(I,u,j)=(I_{q-1}(A),\pi_{q-1}(A),j_{q-1}(A))$ and apply
Lemma~\ref{lem:greedy} with $m=N_{q-1}$. On the remainder, if present,
retain $I,u,j$. For each small piece $C_l$ with center $w_l$, assign
\begin{equation}\label{eq:ordinaryupdate}
 j_q(C_l)=j+1,\qquad \pi_q(C_l)=w_l,\qquad
 I_q(C_l)=\{i\in I:|w_{l,i}-u_i|\le r^{-j}\}.
\end{equation}
There are at most $N_{q-1}$ children.

\subsection[Second rule]{Second rule: an index satisfies~\eqref{eq:trigger}}

Put $k=q-n$ and write
\[
 (I,u,j)=(I_n,\pi_n,j_n),\qquad
 (J,v,j+1)=(I_{q-1},\pi_{q-1},j_{q-1}).
\]
By the unchanged levels in~\eqref{eq:trigger},
\begin{equation}\label{eq:historyparameters}
 v=\pi_{n+1},\qquad
 J=\{i\in I:|v_i-u_i|\le r^{-j}\}.
\end{equation}
First apply Lemma~\ref{lem:greedy} to $A$ with its current parameters
$(J,v,j+1)$ and with $s=N_{q-2}$ pieces allowed. On its remainder $R$,
retain the current data $(J,v,j+1)$.

Each small piece $B=C_l$, with center $w_l\in B$, satisfies
\[
 \sup_{t\in B}d_{J,v,j+1}(t,w_l)^2\le2^q r^{-2(j+1)}.
\]
Apply Lemma~\ref{lem:second} to $B$, using the old parameters $(I,u,j)$,
the current parameters $(J,v,j+1)$, and the index $n$. This partitions $B$
into at most $N_n$ sets $D_{l,p}$ and a possible remainder $E_l$.
Assign the same new data to every nonempty one of these children:
\begin{equation}\label{eq:secondupdate}
 j_q=j+2,\qquad \pi_q=w_l,\qquad
 I_q=J_l:=\{i\in J:|w_{l,i}-v_i|\le r^{-j-1}\}.
\end{equation}
The center need not belong to each child; it belongs to the parent $A$,
as required by Theorem~\ref{thm:criterion}.

Since $q-n\ge2$, we have $N_n\le s$. The number of children of $A$ is at
most
\begin{equation}\label{eq:childcount}
 1+(s-1)(N_n+1)\le1+(s-1)(s+1)=s^2=N_{q-1}.
\end{equation}

\subsection{Verification of the construction}

Both rules split each parent into at most $N_{q-1}$ children. Consequently
$|\A_q|\le N_{q-1}^2=N_q$, and the sequence is admissible.
Properties~\ref{P:levels} and~\ref{P:centers} follow directly from the
assignments.

On an unchanged-level child, the energy bound at level $q-1$ implies
\eqref{eq:inductiveenergy} at level $q$. For an increasing child of the
first rule, Lemma~\ref{lem:split} gives
\[
 \norm{\varphi_{I_q,w_l,j+1}(t)}_2^2
 \le d_{I,u,j}(t,w_l)^2\le2^q r^{-2j}.
\]
This is~\eqref{eq:inductiveenergy} with new level $j+1$. For an increasing
child of the second rule, the same lemma, now applied to $(J,v,j+1)$ and
$w_l$, gives
\[
 \norm{\varphi_{J_l,w_l,j+2}(t)}_2^2
 \le d_{J,v,j+1}(t,w_l)^2\le2^q r^{-2(j+1)},
\]
which is the required bound with new level $j+2$.

We record two further consequences. Fix $t\in T$ and freeze the parameters
along its branch by setting, for nonempty $D\subset T$,
\begin{equation}\label{eq:branchfunctional}
 \F_m(D)=F_{I_m(t),\pi_m(t),j_m(t)}(D),\qquad
 F_m=\F_m(A_m(t)).
\end{equation}
Repeated use of Lemma~\ref{lem:split} shows that, for every $D$,
\begin{equation}\label{eq:subsetmonotonicity}
 \F_{m+1}(D)\le\F_m(D).
\end{equation}
Together with set inclusion and~\eqref{eq:boundedfunctional}, this gives
\begin{equation}\label{eq:branchmonotonicity}
 0\le F_{m+1}\le F_m\le b(T).
\end{equation}

Finally, in either rule an unchanged-level child is the remainder of the
first subdivision. Thus, whenever $j_q(t)=j_{q-1}(t)$, for every $z\in A_q(t)$
and every nonempty
\[
 D\subset A_q(t)\cap
 B_{d_{I_{q-1},\pi_{q-1},j_{q-1}}}
       (z,K^{-1}2^{q/2}r^{-j_q(t)}),
\]
Lemma~\ref{lem:greedy} gives
\begin{equation}\label{eq:residualproperty}
 F_{q-1}\ge\F_{q-1}(D)+\eta_{\rm G}2^q r^{-j_q(t)}.
\end{equation}
This estimate will be used at the first level of each selected interval.

\section{Summing the scales}\label{sec:summation}

We first recall the ordering lemma used in~\cite[Lemma~6.1]{BL2014}.
Summing the geometric series exactly gives the constant below.

\begin{lemma}[Selection of scales]\label{lem:ordering}
Let $(a_n)_{n\ge0}$ be a bounded sequence of positive numbers and
$\alpha>1$. Define
\[
 \Pset=\{n\ge0:a_m<a_n\alpha^{|m-n|}\text{ for all }m\ne n\}.
\]
Then $\Pset\ne\varnothing$ and
\begin{equation}\label{eq:ordering}
 \sum_{n\ge0}a_n\le\frac{\alpha+1}{\alpha-1}\sum_{n\in\Pset}a_n.
\end{equation}
\end{lemma}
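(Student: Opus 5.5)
The plan is to charge every term $a_m$ to a nearby element of $\Pset$ by following a strictly increasing path, and then to sum the resulting geometric weights.

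\emph{Charging map.} For $m\ge0$, I will construct a finite chain $m=m_0,m_1,\dots,m_k$. If $m_i\in\Pset$, the chain stops. Otherwise there is some $m'\ne m_i$ with $a_{m'}\ge a_{m_i}\alpha^{|m'-m_i|}$, and I set $m_{i+1}=m'$. Each step multiplies the value by at least $\alpha^{|m_{i+1}-m_i|}\ge\alpha$. Since $(a_n)$ is bounded and $a_m>0$, the chain must stop after finitely many steps, and it ends at some $n=\sigma(m)\in\Pset$. This also shows $\Pset\ne\varnothing$.

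Multiplying the step inequalities and using the triangle inequality $\sum_i|m_{i+1}-m_i|\ge|n-m|$ gives
\[
a_n\ge a_m\,\alpha^{|n-m|}.
\]
So every $m$ has some $n\in\Pset$ with $a_m\le a_n\alpha^{-|n-m|}$.

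\emph{Summation.} Grouping the terms according to $\sigma(m)=n$ and dropping the condition $\sigma(m)=n$ on the right, I get
\[
\sum_m a_m\le\sum_{n\in\Pset}a_n\sum_{m\in\Z}\alpha^{-|m-n|}=\frac{\alpha+1}{\alpha-1}\sum_{n\in\Pset}a_n,
\]
because
\[
\sum_{m\in\Z}\alpha^{-|m|}=1+\frac{2}{\alpha-1}=\frac{\alpha+1}{\alpha-1}.
\]

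\emph{Main point of care.} The only delicate step is the termination of the chain. I will handle it by noting that after $i$ steps the value satisfies $a_{m_i}\ge a_m\alpha^i$, which is impossible for large $i$ because $\sup a<\infty$. Nothing beyond this is needed: no finiteness of $\sum a_n$ is assumed, and if the right-hand side is infinite there is nothing to prove.
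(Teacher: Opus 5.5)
Your proof is correct and is essentially the paper's argument. The paper packages your chains as the partial order $n\preceq m$ iff $a_m\ge a_n\alpha^{|m-n|}$, where transitivity is your triangle-inequality step and $\Pset$ is the set of maximal elements. It uses boundedness, exactly as you do, to place every index below a maximal element, and then sums the same two-sided geometric series.
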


\begin{proof}
Put $n\preceq m$ when $a_m\ge a_n\alpha^{|m-n|}$. This is a partial
order, and its maximal elements are exactly $\Pset$. A strictly increasing
chain in this order multiplies the values by at least $\alpha$ at each step.
Boundedness therefore implies that every index is below a maximal element.
Consequently
\[
 \sum_{n\ge0}a_n
 \le\sum_{m\in\Pset}a_m\sum_{n\ge0}\alpha^{-|n-m|}
 \le\left(1+2\sum_{l\ge1}\alpha^{-l}\right)\sum_{m\in\Pset}a_m,
\]
which is~\eqref{eq:ordering}.
\end{proof}

Fix a branch $t\in T$ of the construction in Section~\ref{sec:construction},
and abbreviate
\[
 a_m=2^m r^{-j_m(t)},\qquad F_m=\F_m(A_m(t)).
\]
The sequence $(a_m)$ is bounded. Indeed, if $j_m=j_{m-1}$, take $D=\{t\}$
in~\eqref{eq:residualproperty}. Since $\F_{m-1}(\{t\})=0$,
\[
 a_m\le\eta_{\rm G}^{-1}F_{m-1}\le\eta_{\rm G}^{-1}b(T).
\]
If $j_m=j_{m-1}+1$, then $a_m=(2/r)a_{m-1}\le a_{m-1}$. Induction yields
\begin{equation}\label{eq:abounded}
 \sup_m a_m\le\max\{a_0,\eta_{\rm G}^{-1}b(T)\}.
\end{equation}
We may thus apply Lemma~\ref{lem:ordering} with the fixed value
$\alpha=2^{1/3}$ from~\eqref{eq:parameters}.

\begin{lemma}\label{lem:selecteddecrease}
For every $n\in\Pset\setminus\{0\}$,
\begin{equation}\label{eq:peakpattern}
 j_{n-1}=j_n<j_{n+1}=j_n+1.
\end{equation}
Moreover, the index
\begin{equation}\label{eq:taudef}
 \tau(n)=\min\{q\ge n+2:j_q=j_n+2\}
\end{equation}
exists, satisfies $\tau(n)\le n+\kappa$, and obeys
\begin{equation}\label{eq:selecteddecrease}
 F_{n-1}-F_{\tau(n)}\ge\eta a_n.
\end{equation}
\end{lemma}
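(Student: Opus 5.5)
We argue by direct inspection of the construction along the branch of $t$. Throughout we use the identities~\eqref{eq:parameteridentities}, property~\ref{P:levels}, and the rule $a_m=2^m r^{-j_m}$: an unchanged level doubles $a$, and an increased level multiplies it by $2/r$.

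\emph{The pattern~\eqref{eq:peakpattern}.} Let $n\in\Pset\setminus\{0\}$, so $a_{n-1}<\alpha a_n$ and $a_{n+1}<\alpha a_n$. If $j_n=j_{n-1}+1$, then $a_{n-1}=(r/2)a_n\ge\alpha a_n$ because $r>2\alpha$, which is a contradiction. Hence $j_{n-1}=j_n$. If $j_{n+1}=j_n$, then $a_{n+1}=2a_n>\alpha a_n$, again a contradiction. By~\ref{P:levels}, $j_{n+1}=j_n+1$.

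\emph{Existence of $\tau(n)\le n+\kappa$.} Suppose $j_q=j_n+1$ for all $n+1\le q\le n+\kappa$. Then
\[
a_{n+\kappa}=2^{\kappa}r^{-1}a_n=r^{1/2}a_n=\alpha^\kappa a_n,
\]
since $2^\kappa=r^{3/2}$ and $\alpha^\kappa=2^\ell=r^{1/2}$. This contradicts the strict inequality in the definition of $\Pset$. Levels move by at most one per step, so the first increase after $n+1$ brings the level to $j_n+2$, and it happens at some $q\le n+\kappa$. Hence $\tau(n)$ exists and satisfies $\tau(n)\le n+\kappa$.

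We also record which rule was used at each step $q$ with $n+2\le q\le\tau(n)$. For such $q$ the history satisfies~\eqref{eq:trigger} with this same $n$: we have $j_{n-1}=j_n$, $j_{n+1}=\dots=j_{q-1}=j_n+1$, and $2\le q-n\le\kappa$. So the second rule was used with parameters $(I,u,j)=(I_n,\pi_n,j_n)$ and $(J,v,j+1)=(I_{q-1},\pi_{q-1},j_{q-1})$. At $q=\tau:=\tau(n)$ the level increases. In the second rule this means $t$ lies in a small piece $B=C_l\subset A_{\tau-1}(t)\subset A_n(t)$, and then either in some $D_{l,p}$ or in $E_l$.

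\emph{The decrease~\eqref{eq:selecteddecrease}.} We split into two cases.

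\emph{Case $A_\tau(t)=E_l$.} Lemma~\ref{lem:second} gives
\[
F_{I,u,j}(B)\ge F_{J,v,j+1}(E_l)+\eta_{\rm W}2^n r^{-j}.
\]
Here $2^nr^{-j}=a_n$. On the left, $F_{I,u,j}(B)=\F_n(B)\le F_n\le F_{n-1}$ by inclusion and~\eqref{eq:branchmonotonicity}. On the right, $\F_\tau(E_l)\le\F_{\tau-1}(E_l)=F_{J,v,j+1}(E_l)$ by~\eqref{eq:subsetmonotonicity}, and $\F_\tau(E_l)=F_\tau$.

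\emph{Case $A_\tau(t)=D:=D_{l,p}$.} By~\eqref{eq:oldsmall}, $D\subset B_{d_{I,u,j}}(t^p,K^{-1}2^{n/2}r^{-j})$ with $t^p\in B\subset A_n(t)$. Since $j_n=j_{n-1}$, property~\ref{P:centers} gives $(I_{n-1},\pi_{n-1},j_{n-1})=(I,u,j)$. Thus~\eqref{eq:residualproperty} applies at level $q=n$ with $z=t^p$ and this $D\subset A_n(t)$, and yields $F_{n-1}\ge\F_{n-1}(D)+\eta_{\rm G}a_n$. Finally, $F_\tau=\F_\tau(D)\le\F_{n-1}(D)$ by~\eqref{eq:subsetmonotonicity}.

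In both cases $F_{n-1}-F_\tau\ge\min\{\eta_{\rm G},\eta_{\rm W}\}a_n=\eta a_n$. The main point requiring care is verifying that the trigger~\eqref{eq:trigger} holds with the \emph{same} $n$ at step $\tau$, so that the old parameters in Lemma~\ref{lem:second} are exactly those frozen at level $n$. The rest follows by chaining the monotonicity relations.
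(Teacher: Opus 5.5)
Your proof is correct and follows the paper's argument essentially step for step. You use the same two contradictions with the definition of $\Pset$ for \eqref{eq:peakpattern} and for $\tau(n)\le n+\kappa$, the same check that \eqref{eq:trigger} holds with this $n$ at step $\tau(n)$, and the same case split: ball pieces via \eqref{eq:residualproperty} at level $n$, and the remainder $E_l$ via \eqref{eq:secondgain}, with subset monotonicity closing each case. The only cosmetic difference is in the remainder case, where you bound $F_{I,u,j}(B)\le F_n\le F_{n-1}$ instead of noting, as the paper does, that $B\subset A_{n-1}(t)$ and the parameters at levels $n-1$ and $n$ coincide.
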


\begin{proof}
If $j_n=j_{n-1}+1$, then $a_{n-1}=(r/2)a_n>\alpha a_n$, contradicting
$n\in\Pset$. If $j_{n+1}=j_n$, then $a_{n+1}=2a_n>\alpha a_n$, giving
the same contradiction. This proves~\eqref{eq:peakpattern}.

If the second increase has not occurred by $n+\kappa$, then
$j_{n+\kappa}=j_n+1$, and~\eqref{eq:parameteridentities} gives
\[
 \alpha^{-\kappa}a_{n+\kappa}
 =\frac{(2/\alpha)^\kappa}{r}a_n=a_n.
\]
This contradicts the strict inequality in the definition of $\Pset$.
Thus $q:=\tau(n)\le n+\kappa$.

At level $q$, condition~\eqref{eq:trigger} holds with this $n$, so the
second rule is used. Write $(I,u,j)$ for the data at level $n$ and
$(J,v,j+1)$ for the data at level $q-1$. Let $B=C_l$ be the small piece of
the first subdivision which contains $A_q(t)$. Such a piece exists because
the level increases at $q$; hence the branch does not enter its unchanged-level
remainder. There are two possibilities.

If $A_q(t)=D_{l,p}$ is one of the ball pieces, then
Lemma~\ref{lem:second} gives a point $t^p\in B\subset A_n(t)$ with
\[
 A_q(t)\subset A_n(t)\cap
 B_{d_{I,u,j}}(t^p,K^{-1}2^{n/2}r^{-j}).
\]
Because $j_{n-1}=j_n$, the parameters at levels $n-1$ and $n$ agree.
Apply~\eqref{eq:residualproperty} at level $n$ and then
\eqref{eq:subsetmonotonicity} to obtain
\[
 F_{n-1}\ge\F_{n-1}(A_q(t))+\eta_{\rm G}a_n
            \ge F_q+\eta_{\rm G}a_n.
\]

If $A_q(t)=E_l$ is the remainder of the second subdivision, then
$B\subset A_{n-1}(t)$, so~\eqref{eq:secondgain} gives
\[
 F_{n-1}\ge F_{I,u,j}(B)
 \ge F_{J,v,j+1}(E_l)+\eta_{\rm W}a_n
 \ge F_q+\eta_{\rm W}a_n.
\]
The last step is again the contraction comparison. Both cases imply
\eqref{eq:selecteddecrease} by~\eqref{eq:gainconstants}.
\end{proof}

\begin{proposition}\label{prop:sum}
The partitions constructed in Section~\ref{sec:construction} satisfy
\eqref{eq:targetsum}.
\end{proposition}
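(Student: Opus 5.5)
Fix a branch $t\in T$ and put $a_m=2^m r^{-j_m(t)}$. The bound \eqref{eq:abounded} lets us apply Lemma~\ref{lem:ordering} with $\alpha=2^{1/3}$, which gives
\[
 \sum_{m\ge0}a_m\le\frac{\alpha+1}{\alpha-1}\sum_{n\in\Pset}a_n .
\]
So it suffices to bound $\sum_{n\in\Pset}a_n$ by $Cb(T)$, uniformly in $t$. We treat $n=0$ separately. By \eqref{eq:jzero} and \eqref{eq:diameter},
\[
 a_0=r^{-j_0}\le\diam_2(T)\le4b(T).
\]
Every $n\in\Pset\setminus\{0\}$ satisfies Lemma~\ref{lem:selecteddecrease}, so
\[
 \eta a_n\le F_{n-1}-F_{\tau(n)},
\]
where $\tau(n)\le n+\kappa$.

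The key step is to control the overlap of the intervals $[n-1,\tau(n)]$ for $n\in\Pset\setminus\{0\}$. We claim each integer $m$ lies in at most two of the half-open intervals $(n-1,\tau(n)]$. Take $n<n'$ in $\Pset\setminus\{0\}$. By \eqref{eq:peakpattern}, $j_{n'-1}=j_{n'}$. By definition of $\tau(n)$, the levels on $(n,\tau(n))$ all equal $j_n+1$, and on this stretch the only place the level is unchanged from one step to the next is strictly after $n+1$. Hence any $n'$ with $n'-1\ge n+1$ and $n'<\tau(n)$ has $j_{n'}=j_n+1$. But \eqref{eq:peakpattern} also forces $j_{n'+1}=j_{n'}+1=j_n+2$, so $\tau(n)\le n'+1$.

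It follows that $n'-1\ge\tau(n)-2$ for all later selected $n'$. Equivalently, consecutive selected indices satisfy $n'\ge\tau(n)-1$. This gives overlap at most two, which matches the abstract's phrase ``intervals of overlap at most two''. I would check this claim carefully by hand.

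With the overlap bound, telescoping applies. Since $F_m$ is nonincreasing by \eqref{eq:branchmonotonicity}, each difference $F_{n-1}-F_{\tau(n)}$ is a sum of the nonnegative increments $F_{m-1}-F_m$ over $m\in(n-1,\tau(n)]$. Summing over $n$ and using the overlap bound,
\[
 \eta\sum_{n\in\Pset\setminus\{0\}}a_n\le2\sum_{m\ge1}(F_{m-1}-F_m)\le2F_0\le2b(T).
\]
Combining this with the estimate for $a_0$ gives
\[
 \sum_m a_m\le\frac{\alpha+1}{\alpha-1}\bigl(4+2\eta^{-1}\bigr)b(T),
\]
uniformly in $t$. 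This is \eqref{eq:targetsum}.

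The main obstacle is the overlap bound, specifically verifying from \eqref{eq:peakpattern} and the definition of $\tau$ that two selected indices cannot interleave badly. If the argument above leaves a gap, a fallback is available: the intervals have length at most $\kappa+1$, so each $m$ lies in at most $\kappa+1$ of them. This gives overlap at most $\kappa+1$, which is still a universal constant and suffices.
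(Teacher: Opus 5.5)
Your proof is correct and follows the paper's route. It combines the ordering lemma, the decrease $\eta a_n\le F_{n-1}-F_{\tau(n)}$ from Lemma~\ref{lem:selecteddecrease}, the overlap bound (your $n'\ge\tau(n)-1$ together with the exclusion of $n'=n+1$ is exactly the paper's $\tau(n_i)\le n_{i+1}+1\le n_{i+2}-1$), and telescoping against the nonincreasing, nonnegative $F_m$. Passing from $n'\ge\tau(n)-1$ to multiplicity two also uses that selected indices are never adjacent; your remark that the step from $n$ to $n+1$ is an increase already supplies this, so no gap remains and the fallback is unnecessary.
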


\begin{proof}
List the positive elements of $\Pset$ as $n_1<n_2<\cdots$, stopping if
the list is finite. Between $n_i+1$ and $\tau(n_i)-1$, the level equals
$j_{n_i}+1$. A later selected index cannot occur before $\tau(n_i)-1$,
because~\eqref{eq:peakpattern} requires an increase immediately afterwards.
Selected indices cannot be adjacent, again by~\eqref{eq:peakpattern}.
Therefore, whenever the indices on the right exist,
\begin{equation}\label{eq:overlap}
 \tau(n_i)\le n_{i+1}+1\le n_{i+2}-1.
\end{equation}
The intervals $[n_i-1,\tau(n_i)]$ with $i$ odd thus have disjoint
interiors, as do those with $i$ even. Monotonicity and nonnegativity of
$(F_m)$ show, first for finite partial sums, that
\[
 \sum_i\bigl(F_{n_i-1}-F_{\tau(n_i)}\bigr)\le2F_0.
\]
In conjunction with Lemma~\ref{lem:selecteddecrease}, this yields
\begin{equation}\label{eq:peakbound}
 \sum_{n\in\Pset\setminus\{0\}}a_n\le\frac{2F_0}{\eta}.
\end{equation}
The ordering lemma now gives
\begin{align*}
 \sum_{m\ge0}a_m
 &\le\frac{\alpha+1}{\alpha-1}
          \left(a_0+\frac{2F_0}{\eta}\right)\\
 &\le\frac{\alpha+1}{\alpha-1}
          \left(4+\frac2\eta\right)b(T).
\end{align*}
Here $a_0=r^{-j_0}\le\diam_2(T)\le4b(T)$ by
\eqref{eq:jzero} and~\eqref{eq:diameter}, and $F_0\le b(T)$ by
\eqref{eq:branchmonotonicity}. The constants are independent of the branch,
so taking the supremum over $t\in T$ proves~\eqref{eq:targetsum}.
\end{proof}

\section{Proof of the decomposition theorem}\label{sec:mainproof}

We first finish the argument for countable $T$. If $b(T)=0$,
\eqref{eq:diameter} shows that $T$ is a singleton, and the conclusion is
immediate. Otherwise translate $T$ so that $0\in T$. Translation does not
change $b(T)$ or $\gamma_2(T)$, and the translation can be added back to
the second component of a decomposition.

Use the partitions of Section~\ref{sec:construction} and put $M=r^2$.
The initial diameter condition~\eqref{eq:initialdiam} follows
from~\eqref{eq:jzero}. Properties~\ref{P:levels} and~\ref{P:centers}
give the required alternatives for the levels and centers, and identify
the coordinate sets with~\eqref{eq:tracking}. Finally, for every real $x$,
\[
 \min\{x^2,r^{-2j}\}\le\clip{x}{j}^{\,2}.
\]
The energy estimate~\eqref{eq:inductiveenergy} therefore implies
\eqref{eq:criterionenergy}, since
$2^n r^{-2(j_n-1)}=r^2 2^n r^{-2j_n}$.
Theorem~\ref{thm:criterion} and Proposition~\ref{prop:sum} give
\eqref{eq:main}, with universal constants because $r$ was fixed before
the construction.

For completeness, let $T\subset\ell^2$ now be arbitrary. Choose a
countable norm-dense subset $S\subset T$. Then $b(S)\le b(T)$, so the
countable case gives
\[
 S\subset S_1+S_2,\qquad
 \sup_{x\in S_1}\norm{x}_1\le Cb(T),\qquad
 \gamma_2(S_2)\le Cb(T).
\]
Finiteness of~\eqref{eq:gamma} implies that $S_2$ is totally bounded:
an admissible sequence whose sum is bounded by a finite number $L$ gives
finite covers of radius $L2^{-n/2}$. Hence its norm closure $\mathcal K$ is compact.
Also
\begin{equation}\label{eq:closuregamma}
 \gamma_2(\mathcal K)\le\gamma_2(S_2).
\end{equation}
Indeed, for fixed admissible nets, the sum in~\eqref{eq:gamma} is a
lower semicontinuous function of the point, as a supremum of its continuous
partial sums. Its uniform bound on $S_2$ therefore extends to $\mathcal K$, and
one can then take the infimum over the nets.

Given $t\in T$, take $s_m\in S$ with $s_m\to t$, and write
$s_m=x_m+y_m$ with $x_m\in S_1$ and $y_m\in S_2$. By compactness, a
subsequence of $y_m$ converges in $\ell^2$ to a point $y\in\mathcal K$.
Along this subsequence, $x_m\to t-y$. Coordinatewise convergence and
Fatou's lemma imply
\[
 \norm{t-y}_1\le\liminf_m\norm{x_m}_1\le Cb(T).
\]
Thus the sets
\[
 T_1=\{x\in\ell^2:\norm{x}_1\le Cb(T)\},\qquad T_2=\mathcal K
\]
satisfy $T\subset T_1+T_2$ and~\eqref{eq:main}, using
\eqref{eq:closuregamma}. This proves Theorem~\ref{thm:main}.
\qed

\begin{remark}
The construction uses only three current parameters $(I,u,j)$ at each
partition element, together with the finite history needed to recognize
\eqref{eq:trigger}. The coordinate-removal principle and the deterministic
partition criterion are the results from~\cite{BL2014} used here. The
change is in the truncation and partition construction: the metric
comparison~\eqref{eq:metriccomparison} makes the second subdivision
possible, and~\eqref{eq:overlap} accounts for all decreases in the chaining
sum.
\end{remark}

\end{document}